\newcommand{\R}{{\mathbb R}}
\newcommand{\N}{{\mathbb N}}
\renewcommand{\ni}{\noindent}
\newcommand\hsp[1]{\mbox{}\hspace{#1mm}} %%%      \hsp
\newcommand\vsp[1]{\par \vspace{#1mm}}   %%%      \vsp
\newcommand\h{\hsp}                      %%%      \h
\renewcommand\v{\vsp}                    %%%      \v
\numberwithin{equation}{section}
\newcommand\tagr[1]{\v{-6}\mbox{}\hsp{14}\hfill#1\vsp2}    %nameeq = name equation on right
\newcommand\minus{\!\setminus\!}
\newcommand{\supp}{\mbox{\small\bf patch}}
\newtheorem{theorem}{Theorem}
\newtheorem{lemma}{Lemma}[section]
\newtheorem{prop}[lemma]{Proposition}
\newtheorem{coro}{Corollary}
\newtheorem{remark}{Remark}
\begin{document}
\title[\Tiny Linearly repetitive Delone sets. \h4  \bf \Tiny \today]
{Delone sets with finite local complexity: Linear repetitivity versus positivity of weights}

\author{\Tiny Adnene Besbes}
\address{Institut Pr\'{e}paratoire aux \'{E}tudes d'Ing\'{e}nieur de Bizerte, Tunisia}
\email{abesbes@math.jussieu.fr }

\author{Michael Boshernitzan*}
\address{Rice University, Houston, Texas 77251, USA}
\email{michael@rice.edu }
\thanks{*The author was supported in part by NSF Grant: DMS-1102298}

\author{Daniel Lenz}
\address{Mathematisches Institut, Friedrich-Schiller Universit\"at Jena, Ernst-Abb\'{e} Platz~2, D-07743 Jena, Germany}
\email{ daniel.lenz@uni-jena.de }

%%%%%%%%%%%%%%%%%%%%%%%%%%%%%%%%%%%%%%%%%%%%%%%%%%%%%%%%    maketitle
\maketitle
\v3
%$^1$ \\[0.1cm]
%$^2$ Fakult\"at f\"ur Mathematik, TU Chemnitz, D-09107 Chemnitz,
%Germany, E-Mail: dlenz@mathematik.tu-chemnitz.de\\[0.3cm]
%
%2000 AMS Subject Classification: \\
%Key Words:

\begin{abstract}
We consider Delone sets with finite local complexity. We characterize
validity of a subadditive ergodic theorem by uniform positivity of
certain weights. The latter can be considered to be an averaged
version of linear repetitivity.  In this context, we show that linear
repetitivity is equivalent to positivity of weights combined with a
certain balancedness of the shape of return patterns.
 \end{abstract}

%{\bf
%This is a new version with more changes made.
%In my definition, $L_{P}(Q)$ does not need to be a subset of $L_{P}$.
%I hope it is OK.
%Is PATCH=BALL PATTERN?  Should we make one choice to use in the paper?
%Call you tomorrow, Feb 28, at the time you prefer - let me know.
%}

\section{Introduction}  \label{Introduction}

Aperiodic point sets with long range order have attracted considerable
attention in recent years (see e.g. the monographs and conference  proceedings  \cite{BM,Moo,Pat,Sen}). On the one
hand, this is due to the actual discovery of physical substances, later called
quasicrystals, exhibiting such features \cite{Ni,SBGC}. On the other hand this is due
to intrinsic mathematical interest in describing the very border between
crystallinity and aperiodicity. While there is no aximatic framework for
aperiodic order yet, various types of order conditions in terms of local complexity functions  have been studied
\cite{Lag,LP,LP2}. Here, we are concerned with linear repetitivity. This condition is
given by a linear bound on the growth rate of the repetitivity function.
It has been brought forward by Lagarias/Pleasants in \cite{LP}
as a  characterization of  perfectly
ordered quasicrystals.  In fact, as shown in \cite{LP2}, among the aperiodic
repetitive Delone  sets, linearly repetitive ones have the slowest possible
growth rate of their repetitivity function.

Let us also note that independent of the work of Lagarias/Pleasants,
the analogue notion for subshifts has
received a thorough study in works of Durand \cite{Du,Du2}.

It turns out that linear repetitivity (or linear recurrence) implies validity
of a subadditive ergodic theorem \cite{DL}. Such a subadditive ergodic theorem,
is useful in application to Schr\"odinger operators and to lattice gas theory
\cite{GH,Hof1,Hof2,Len1} (see Section 9 of \cite{Len3}  for recent applications to diffraction theory as well).  Now, for subshifts it is possible to characterize validity of a
certain subadditive ergodic theorem by positivity of certain weights as shown by one of the authors in  \cite{Len}. This
positivity of weights can be considered as an averaged version of linear
repetitivity. This raises two questions:

\smallskip

Q1: Can validity of a subadditive ergodic theorem be characterized for Delone
sets in terms of positivity of certain weights?

\smallskip

Q2: What is the relationship between the positivity of weights and linear
repetitivity?

\smallskip

Our main results deal with these questions. Theorem
\ref{characterization_set} shows that positivity of certain weights is
indeed equivalent to validity of a subadditive ergodic theorem.  This
answers Question 1.  Theorem \ref{characterization_lr} characterizes
linear repetitivity in terms of positivity of these weights combined
with a  condition on the shape of return patterns.  This gives an answer to Question
2. In the case of symbolic dynamics  analogues to Theorem \ref{characterization_lr}  have been obtained earlier. In particular, there is some unpublished work of T. Monteil \cite{Mon}  characterizing linear repetitivity in terms of positivity of weights and a repulsion property as well as a more general result obtained by   Boshernitzan \cite{Bos}. Boshernitzan's result shows equivalence of linear repetitivity and positivity of weights.  Our result and its proof  are inspired by \cite{Bos}. The condition (U) appearing below is not needed in \cite{Bos} (see Remark \ref{remark-U} at the and of the paper  as well).

%%%%%%%%%%%%%%%%%%%%%%%%%%%%%%%%%%%%%%%%%%%%%%%%%%
\section{Notation and results}\label{Notation}
Fix $N\in\N$. We consider subsets of $\R^N$. The Euclidean distance on $\R^N$
is denoted by $\varrho$. The closed ball around $p\in \R^N$ with radius $R$ is
denoted by $B_R (p)$. We write  $B_R$  for the closed ball $B_R(\bold0)$
(around the origin  $\bold0\in\R^N$).

A subset $\varLambda\subset\R^N$ is called {\em uniformly discrete}
if its {\em packing radius}
%%%%%%%%%%%%%%%%%%%%
%%%%\[
\begin{equation}\label{eq:pack}
r_{pack}(\varLambda):= \inf \big\{ \tfrac{1}{2} \, \varrho(x,y)\colon x,y\in \varLambda,
x\neq y\big\}
\end{equation}
%%%%\]
%%%%%%%%%%%%%%%%%%%%
is positive.  A subset $\varLambda$ is called {\em relatively
dense}  if its {\em covering radius}
%%%%%%%%%%%%%%%%%%%%
%%%%\[
\begin{equation}\label{eq:cov}
 r_{cov}(\varLambda):= \sup \big\{\varrho(p,\varLambda)\!\colon p\in\R^N\big\}
\end{equation}
%%%%\]
%%%%%%%%%%%%%%%%%%%%
is   finite.
A set which is both relatively dense and uniformly discrete is called a {\em Delone} set.

Let $\R^+=\{r\in\R \mid r>0\}$ stand for the set of positive real numbers.

Given a Delone set $\varLambda$, the cartesian product
\begin{equation}\label{eq:bps}
\mathcal P_{\varLambda}=\varLambda\times\R^+
\end{equation}
is refered to as {\em the collection of ball patterns on $\varLambda$}.
A ball pattern
\begin{equation}\label{eq:bp}
P=(x,R)\in \varLambda\times\R^+=\mathcal P_{\varLambda}
\end{equation}
is determined by its {\em center}\/ $c(P)=x$ and its {\em radius} $r(P)=R$.
Henceforth we often abbreviate $\mathcal P_\varLambda$ to just $\mathcal P$
when the Delone set  $\varLambda$ is clear from context.

Given a ball pattern  $P=(x,R)$, its {\em patch}\/ is defined as
the finite set
\begin{equation}\label{eq:supp}
\supp(P)=\supp(x,R)\colon=(\varLambda - x) \cap B_R\subset\R^N.
\end{equation}
Clearly, every patch contains the origin $\bold 0\in\R^N$.

A subset  $A\subset \R^N$ is called \textit{discrete} if\,
  $\sharp(A\cap B_R)<\infty$,  for all  $R>0$.
Here $\sharp(A)\!=\!\sharp A$\,  stands for the cardinality of a set $A$.

A Delone  set  $\varLambda$\,  is said to be of  {\em finite local complexity}  (FLC)
if  the difference set   $\varLambda-\varLambda=\{x-y\colon x,y\in \varLambda\}$\, is a
discrete subset of  $\R^N$. Note that $\varLambda-\varLambda$ is exactly
the union of all patches:
\[
\varLambda-\varLambda \ =\!\!\bigcup_{x\in\varLambda,\,R\in\R^+}\!\!\! \supp(x,R),
\]
and one verifies that (FLC) is equivalent to the following condition:
%%%%%%%%%%%%%%%%%%%%
%%%%\[
\begin{equation}\label{eq:flc1}
\sharp(\{\supp(x,R)\mid x\in\varLambda\})<\infty, \h2 \text{for all } R>0.
\end{equation}
%\v{-6}
\tagr{(FLC$_{1}$\!)}
%%%%\]
%%%%%%%%%%%%%%%%%%%%
\noindent (For every $R>0$  there is only a finite number of patches for
ball patterns of radius~$R$).

In fact, both (FLC)  and (FLC$_1$) are equivalent to the following condition
 %%%%%%%%%%%%%%%%%%%%
%%%%\[
\begin{equation}\label{eq:flc2}
\sharp(\{\supp(x,R)\mid x\in\varLambda\})<\infty,
\quad \text{for }R=2\h{.1}r_{cov}(\varLambda),
\end{equation}
\tagr{(FLC$_{2}$\!)}
%%%%\]
%%%%%%%%%%%%%%%%%%%%
\ni(see \eqref{eq:cov} for definition of $r_{cov}(\varLambda)$).
For proofs of the equivalence of the above versions of (FLC) we refer
to \cite{Lag}.
\v1
Given a ball pattern  $P=(x,R)\in\mathcal P$,
its {\em locater set} \ $L_P$ is defined by the formula
%%%%%%%%%%%%%%%%%%%%
%%%%\[
\begin{equation}\label{eq:loc}
L_P=\big\{y\in \varLambda\colon \supp(y,R) = \supp(x,R)\big\}.
\end{equation}
%%%%\]
%%%%%%%%%%%%%%%%%%%%

A Delone set $\varLambda$  is said to be {\em repetitive}  if
the locater set  \ $L_P$ \  of any ball pattern $P\in\mathcal P$
is  relatively dense. Repetitive Delone sets $\varLambda$ satisfy (FLC), and their
locater sets  $L_P$  (of any ball pattern $P\in\mathcal P_{\varLambda}$)
form a Delone set.

In the paper we deal mostly  with repetitive Delone sets.
A Delone set $\varLambda$ is said to be {\em non-periodic} if  \
$ \varLambda - x \neq \varLambda $ \ for all $x\in\R^N$ with $\,x\neq \bold0$.  \v1

Denote by $\mathcal B$  the family of bounded subsets in $\R^{N}$\!.
Given a bounded set $Q\in \mathcal B$  and a ball pattern  $P=(x,R)\in\mathcal P_\varLambda$,
we define the set
\[
L_P(Q)=\{ y\in L_P:  B_R (y) \subset Q\},
\]
and two quantities $\sharp_P Q$, $\sharp'_P Q$ (valued in non-negative integers)
as follows.

The number $\sharp_P Q$  (called the
\textit{number of copies of $P$ in $Q$})  is defined by the formula
%%%%%%%%%%%%%%%%%%%%
\begin{equation}\label{card1}
\sharp_P Q :=\sharp(L_P(Q)),
\end{equation}
%%%%%%%%%%%%%%%%%%%%

The number   $\sharp'_P Q \leq\sharp_P Q $ (called the
{\em the maximal number of completely disjoint copies of $P$  in $Q$})
is defined by the formula
%%%%%%%%%%%%%%%%%%%%
\begin{align}\label{eq:card2}
   \sharp'_P Q := \max  \big\{\sharp(A)\colon  &\ A\subset L_P(Q) \h3 \text{\small such that}\\
   &\varrho(x,y)>2 R, \ \text{\small for all }\, x,y\in A, x\neq y\big\}.\notag
\end{align}
(Note that the inequality $\varrho(x,y)>2R$ in the preceding line is equivalent to
the requirement that $B_R (x)\cap B_R (y) = \emptyset$).

%%%%%%%%%%%%%%%%%%%%
%%%%%%%%%%%%%%%%%%%%
%A set  $A\subset \R^N$  is called  $P$-disjoint if  \ $(a+P)\cap(b+P)=\emptyset$,
%for all distinct \ $a,b\in A$.
%We define
%%%%%%%%%%%%%%%%%%%%
%%%%\[
%\begin{equation}\label{eq:card2}
%\sharp'_P Q := \max  \{\sharp(A)\colon  A\subset L_P(Q), \ \text{$A$ is $P$-disjoint}\}
%\end{equation}
%%%%\]
%%%%%%%%%%%%%%%%%%%%
%to be the maximal cardinality of $P$-disjoint subsets of $L_P(Q)$.
%Let $|\cdot|$ denotes the Lebesgue measure.
%A sequence $(Q_n)$ of
%subsets of $\R^N$ is called van Hove sequence if
%$$\lim_{n\to \infty} \frac{|\partial^R Q_n|}{|Q_n|} =0$$ for any
%$R>0$. Here, $\partial^R Q$ is the set of all points in $R^N$ whose
%distance to the boundary of $Q$ does not exceed $R$.
%
%A van Hove
%sequence is called cube like if there exists a sequence of cubes $C_n$
%and a $\delta>0$ with $ Q_n \subset C_n$ and $|Q_n|\geq \delta |C_n|$
%for all $n\in \N$.
\vsp2
We write  $|\cdot|$  for the Lebesgue measure in  $\R^N$\!.
Clearly  $|B_R|=R^N|B_1|$.

Next, given a ball pattern $P=(x,R)\in\mathcal P_\varLambda$, we define
two quantities, $\nu(P)$ and $\nu'(P)$, as follows.
The {\em lower density of $P$}, $\nu(P)$, is defined  by the formula
%%%%%%%%%%%%%%%%
\begin{equation}\label{eq:lowdensity}
\nu(P)\colon\!\!=\liminf_{|C|\to\infty}\limits\, \tfrac{\sharp_P C\,\cdot\, |B_{R}|}{|C|}=
|B_1|\cdot\liminf_{|C|\to\infty}\limits\, \tfrac{\sharp_P C\,\cdot\, R^N}{|C|},
\end{equation}
%%%%%%%%%%%%%%%%
and the {\em lower reduced density of $P$}, $\nu'(P)$,  is defined  by the formula
%%%%%%%%%%%%
\begin{equation}\label{eq:lowdensity-disjoint}
\nu'(P)\colon\!\!=\liminf_{|C|\to\infty}\limits\, \tfrac{\sharp'_P C\,\cdot\, |B_{R}|}{|C|}=
|B_1|\cdot\liminf_{|C|\to\infty}\limits\, \tfrac{\sharp'_P C\,\cdot\, R^N}{|C|}.
\end{equation}
%%%%%%%%%%%%
Henceforth, when writing  $|C|\!\to\!\infty$,   it is always meant that  $C$  runs over the
cubes in~$\R^N$\!.

One easily verifies that, for any Delone set $\varLambda$ and any ball pattern
$P\in\mathcal P_\varLambda$, the inequalities
%%%%%%%%
\begin{align}
0\leq \nu'(P)&\leq\nu(P)<\infty,\label{ineq:nunu}\\
\nu'(P)&\leq|B_1|\,\big(\tfrac{\sqrt N}2\big)^N,\label{ineq:nup} \quad \text{and}\\
\nu'(P)&>0 \quad \text{(assuming that }\varLambda\ \text{is repetitive)}\label{ineq:nupp}
\end{align}
%%%%
hold. (The inequality \eqref{ineq:nup} follows from the observation that
the diameter of a cube is smaller than $2R$ if its sidelength is smaller
than  $\tfrac{2R}{\sqrt N}$).

A Delone set $\varLambda$ is said to satisfy
{\em positivity of quasiweights} (PQ)  \ if
%%%%%%%%
\begin{equation}\label{eq:PQ}
w':=\inf_{\substack{ P\in{\mathcal P} \\ r(P)\geq1}}\,\nu'(P)>0
\end{equation}
%%%%%%%%
\v{-5}
\tagr{(PQ)}\v4
\noindent (see \eqref{eq:lowdensity-disjoint}).
A Delone set $\varLambda$ is said to satisfy
{\em positivity of weights} (PW)  \ if
%%%%%%%%%%
\begin{equation}\label{eq:PW}
w:=\inf_{\substack{ P\in{\mathcal P} \\ r(P)\geq1}}\,\nu(P)>0
\end{equation}
%%%%%%%%%%
\v{-5}
\tagr{(PW)}\v4
\noindent (see \eqref{eq:lowdensity}).
The constant $1$  (in \eqref{eq:PQ} and \eqref{eq:PW} )  is arbitrary; it can be replaced by
any other positive constant.

The following is a useful observation.
%%%%%%%%%%%%%%%
\begin{remark}  Let $\varLambda$ be a Delone set. Then $w'\leq w$, and the implications
\begin{equation}\label{eq:pqpw}
\text{($\varLambda$ satisfies PQ)}\implies\text{($\varLambda$ satisfies PW)}\implies
\text{($\varLambda$ is repetitive)}
\end{equation}
take place.
\end{remark}
%%%%%%%%%%%%%%%

  Indeed,  $w'\leq w$  follows from  \eqref{ineq:nunu}, and the implications
are straighhtforward.
\v2

Recall that $\mathcal B$  stands for the family of bounded subsets in $\R^{N}$\!.
A real valued function \mbox{$F\colon\mathcal B\to\R$}\,  is called {\em subadditive}\ if
%%%%%%%%%%%%%%%%%%%%%%%
\begin{equation}\label{ineq:subadd}
F(Q_1\cup Q_2)\leq F(Q_1) + F(Q_2),
\end{equation}
\tagr{(subadditivity)}
%%%%%%%%%%%%%%%%%%%%%%%
\ni whenever $Q_1,Q_2\in\mathcal B$ are disjoint.
\v1
Such a function $F$ is said to be {\em $\varLambda$-invariant} if
%if the following  two conditions \\ are met:
%\[
%F(Q)=F(Q\cap\varLambda)  \quad \mbox{(\small i.e., $F$  is ``supported'' by  %$\varLambda$),}
%\]
%\v1\tagr{($\Lambda1)$}
%\v{-1}
%\noindent and
%\[
%F(Q)=F(t+Q)  \quad \mbox{\small whenever both \ \  $Q\subset\varLambda$ \ \ and \ \  %$Q+t\subset \varLambda$}.
%\]
%\v{.5}\tagr{($\Lambda2)$}
$$F(Q) = F(t +Q) \quad \mbox{\small whenever $ t+ (Q\cap \varLambda) = (t + Q)\cap \varLambda$ }.$$

A Delone set $\varLambda$ is said to satisfy a subadditive ergodic theorem (SET) if
for any subadditive invariant function $F$  the limit
%%%%%%%%%%%%%%%%%%%%%%%
\begin{equation}\label{eq:SET}
\mu(F)=\lim_{|C|\to \infty}\limits \tfrac{F(C)}{|C|}
\end{equation}
\v{-2.3}\tagr{(SET)}\v1
\noindent exists.  (Recall that, under our conventions,  $C$  runs over the cubes in $\R^N$).

%%%%%%%%%%%%%%%%%%%%%%%%%%%%%%%%%%%%%%%%%%%%%%%%%%%%%
\begin{theorem}\label{characterization_set}
Let $\varLambda$ be a repetitive   Delone set. Then {\em (SET)} is equivalent to {\em(PQ)}.
\end{theorem}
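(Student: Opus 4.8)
The plan is to prove the two implications separately. It is worth noting at the outset why \emph{quasiweights} ($\nu'$), rather than ordinary weights ($\nu$), are the correct quantity here: the disjointness built into $\sharp'_P$ is exactly what makes subadditivity applicable, since pairwise disjoint copies of a ball pattern $(x,R)$ sit inside pairwise disjoint balls $B_R(y)$, and on a disjoint union subadditivity gives the clean bound $F(\bigcup_y B_R(y))\le\sum_y F(B_R(y))$. This is what drives the forward implication; the converse is proved by contraposition, producing an explicit oscillating function when (PQ) fails.

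\textbf{(PQ) $\Rightarrow$ (SET).} Fix a subadditive $\varLambda$-invariant $F$ and set $\underline\mu=\liminf_{|C|\to\infty}F(C)/|C|$ and $\bar\mu=\limsup_{|C|\to\infty}F(C)/|C|$; tiling a cube by subcubes and applying subadditivity bounds $F(C)/|C|$ from above, so $\bar\mu<\infty$ and it suffices to show $\bar\mu\le\underline\mu$. Given $\epsilon>0$, choose a ball $B_0=B_{R_0}(x_0)$, $x_0\in\varLambda$, $R_0\ge1$, with $F(B_0)/|B_0|<\underline\mu+\epsilon$, and let $P_0$ be the associated pattern. For a large cube $C$, pack in a maximal family of pairwise disjoint copies $B_{R_0}(y)$, $y\in L_{P_0}(C)$; by (PQ) and \eqref{eq:lowdensity-disjoint} their number satisfies $\sharp'_{P_0}C\ge(w'-o(1))|C|/|B_{R_0}|$, so they cover a fraction $f\ge w'-o(1)$ of $C$. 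Using invariance ($F(B_{R_0}(y))=F(B_0)$) and subadditivity, $F(C)\le\sharp'_{P_0}C\cdot F(B_0)+F(L)$ where $L$ is the leftover, of volume $(1-f)|C|$. Granting the estimate $F(L)\le(\bar\mu+\delta)|L|+o(|C|)$ and passing to the limit gives $\bar\mu\le f(\underline\mu+\epsilon)+(1-f)(\bar\mu+\delta)$; since $f\ge w'>0$ this rearranges to $\bar\mu\le\underline\mu+\epsilon+\tfrac1{w'}\delta$, and letting $\epsilon,\delta\to0$ yields $\bar\mu\le\underline\mu$. The technical heart, and the main obstacle in this direction, is precisely the bound on $F(L)$: because $F$ is only subadditive and \emph{not} monotone, one cannot enlarge $L$ to a union of cubes, and must instead split $L$ into interior grid cubes (where $F/|\cdot|\le\bar\mu+\delta$) and a thin collar, controlling the collar via (FLC), which supplies only finitely many local patterns at each scale and hence a uniform bound on the collar's contribution.

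\textbf{(SET) $\Rightarrow$ (PQ).} Assume $w'=0$, so there are patterns $P_n$ with $r(P_n)=R_n\ge1$ and $\nu'(P_n)\to0$. By (FLC) only finitely many distinct patches occur for radii in any bounded interval, and each has positive reduced density by \eqref{ineq:nupp}; hence $R_n\to\infty$. If the counting ratio $\sharp'_{P_n}C\cdot|B_{R_n}|/|C|$ fails to converge as $|C|\to\infty$ for some $n$, then $F_{P_n}:=-\sharp'_{P_n}(\cdot)\,|B_{R_n}|$ (subadditive, since $\sharp'_{P_n}$ is superadditive on disjoint unions, and invariant) already violates (SET) and we are done. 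Otherwise each such ratio converges uniformly in the position of $C$ to $\nu'(P_n)$, so for each $n$ there is a \emph{settling scale} $L_n$ beyond which the ratio lies within $2^{-n}$ of $\nu'(P_n)$ for all cubes of side $\ge L_n$. Passing to a fast-growing subsequence (relabelled $n=1,2,\dots$) we arrange $2R_n>L_{n-1}$ and $\nu'(P_n)\le2^{-n}$ with $\nu'(P_n)$ small enough that a copy-free cube of side $2R_n$ exists (if every such cube contained a copy-ball, the density of copies would be bounded below by the absolute constant $|B_1|/2^N$ up to a factor, contradicting $\nu'(P_n)\to0$). For fixed bounded $Q$ all but finitely many $F_n(Q)$ vanish, so $F:=\sum_n F_{P_n}$ is a well-defined subadditive $\varLambda$-invariant function.

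The key point is that at scale $s_n=2R_n\to\infty$ the $n$-th term oscillates by a definite amount while the others are frozen. Evaluating $F(C)/|C|$ on a cube $C$ of side $s_n$: terms of index $>n$ vanish (their balls do not fit), terms of index $<n$ are past their settling scale ($s_n>L_{n-1}\ge L_j$) and contribute $\approx-\sum_{j<n}\nu'(P_j)\to-S$ with $S<\infty$, independently of the placement of $C$. For the $n$-th term we use two placements: centering $C$ on a copy-ball gives $\sharp'_{P_n}C\ge1$ and a contribution $\le-|B_1|/2^N$, whereas placing $C$ in a copy-free region gives $\sharp'_{P_n}C=0$ and contribution $0$. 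Thus along these two sequences of cubes with $|C|\to\infty$ the ratios $F(C)/|C|$ tend to values differing by at least $|B_1|/2^N$, so the limit in \eqref{eq:SET} cannot exist and (SET) fails. The main obstacle in this direction is the quantitative bookkeeping: one must fix the growth of $R_n$ against the finite settling scales $L_j$ of the earlier patterns so that the index-$<n$ terms have settled to within $o(1)$ of $-\sum_{j<n}\nu'(P_j)$ uniformly in the placement of $C$, ensuring that the $\Theta(1)$ oscillation of the $n$-th term is not swamped.
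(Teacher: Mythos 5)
Your second half, (SET) $\Rightarrow$ (PQ), is essentially the paper's own proof: the same counting functions $-\sharp'_{P_n}(\cdot)\,|B_{r(P_n)}|$, the same observation that $\nu'(P_n)\to 0$ forces $r(P_n)\to\infty$ via (FLC) and \eqref{ineq:nupp}, the same sparse sum $F=\sum_j F_{P_{n_j}}$ with the subsequence chosen so that the already-settled terms contribute less than a prescribed tolerance at the scale where the next term switches on, and the same two test placements (a cube of side $2r(P_{n_k})$ centered at a point of $L_{P_{n_k}}$ gives a contribution of modulus at least $|B_1|/2^N$, while a cube of comparable side containing no copy-ball gives nothing). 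The differences are cosmetic: the paper derives the contradiction $|B_1|/2^N\le\mu(F)\le\varepsilon$ with a single $\varepsilon<|B_1|/2^N$ rather than exhibiting two cube sequences with different limits, and it obtains convergence of each individual ratio by applying (SET) to $-F_n$ rather than by your preliminary case split. Do make sure your settling tolerances $2^{-j}$ are scaled so that their sum stays below $|B_1|/2^{N+1}$; as written the frozen terms could drift by up to $1$, which can swamp $|B_1|/2^N$ in high dimension. Your existence argument for a copy-free cube of side $2R_n$ is correct (if every such cube contained a copy-ball, a grid of disjoint such cubes would force $\nu'(P_n)\ge|B_1|/2^N$).

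For (PQ) $\Rightarrow$ (SET) the paper offers no argument at all beyond citing \cite{Len} and \cite{DL}, so your sketch is already more explicit than the text; but the step you yourself flag with ``granting the estimate'' is a genuine gap as formulated. The leftover $L=C\setminus\bigcup_y B_{R_0}(y)$ of a disjoint ball packing cannot in general be split into ``interior grid cubes where $F/|\cdot|\le\bar\mu+\delta$ plus a thin collar'': the scale $s_0$ beyond which cubes satisfy $F/|\cdot|\le\bar\mu+\delta$ is dictated by $\delta$ and has no a priori relation to $R_0$, and if $s_0\gtrsim R_0$ the gaps between the packed balls contain no admissible grid cube, so the ``collar'' is all of $L$ --- a positive fraction of $|C|$ --- on which (FLC) yields only a bound $D|L|$ with $D$ unrelated to $\bar\mu$, destroying the rearrangement to $\bar\mu\le\underline\mu$. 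The fix is a matter of ordering the choices: fix $\delta$ and hence $s_0$ first, then choose the reference object to be a \emph{cube} $C_0$ (matching the liminf defining $\underline\mu$) of side much larger than $s_0$, carried by a ball pattern $P_0=(x_0,R_0)$ with $C_0\subset B_{R_0}(x_0)$; then the $s_0$-grid cubes inside $L$ miss only an $O(s_0/R_0)$-fraction of $|L|$ plus a boundary term, and the argument closes. Two further points you should not leave implicit: $\underline\mu$ is defined via cubes, so a ball with $F(B_0)/|B_0|<\underline\mu+\epsilon$ need not exist and invariance must be invoked through the translated cubes $C_0-x_0+y$, $y\in L_{P_0}$; and bounding $F$ on the collar ``via (FLC)'' presupposes a uniform bound for $F$ on small sets, which does not follow from subadditivity and $\varLambda$-invariance alone and must be either assumed or extracted from the class of sets on which $F$ is actually evaluated (this is also glossed over in the paper's appeal to \cite{Len,DL}).
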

\begin{remark} It is possible to replace the cubes appearing in the limit of (SET) by cube like sequences. We refrain from giving details.
% This  extension to cube-like sequence $(Q_n)$ can
% be performed as in \cite{GH}, by
% dividing elements of a van Hove sequence $(Q_n)$ into cubes,              \mbc{To check?}
%it is  not hard to see that $(PQ)$ is actually equivalent
%to existence of  $w>0$ with
%$$\liminf_{n\to \infty} \frac{\sharp'_P C_n}{|C_n|}|P|\geq w$$
%for any ball pattern $P$ and any sequence $(C_n)$ of cubes.}
\end{remark}

The theorem has the following  corollary.

\begin{coro}\label{cor:cs}
If  a   Delone set $\varLambda$ satisfies (PQ), then
for every ball pattern  $P\in\mathcal P_{\varLambda}$,
the frequency \
$\lim_{|C|\to \infty}\limits \frac{\sharp_{P}C}{|C|}$  exists.
\end{coro}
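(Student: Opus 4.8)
The plan is to deduce the corollary directly from Theorem~\ref{characterization_set} by producing one well-chosen subadditive invariant function. First I would note that (PQ) forces $\varLambda$ to be repetitive via the implications in \eqref{eq:pqpw}, so Theorem~\ref{characterization_set} applies and tells us that $\varLambda$ satisfies (SET). It then suffices, for a fixed ball pattern $P=(x,R)\in\mathcal P_\varLambda$, to exhibit a function $F\colon\mathcal B\to\R$ that is both subadditive and $\varLambda$-invariant and whose normalized limit recovers the frequency of $P$.

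The natural candidate is $F(Q):=-\sharp_P Q=-\sharp(L_P(Q))$. To check subadditivity I would observe that for disjoint $Q_1,Q_2$ one has $L_P(Q_1)\cup L_P(Q_2)\subseteq L_P(Q_1\cup Q_2)$, since a ball contained in some $Q_i$ is contained in $Q_1\cup Q_2$, while $L_P(Q_1)\cap L_P(Q_2)=\emptyset$, since a nonempty ball cannot lie inside two disjoint sets. Hence $\sharp_P(Q_1\cup Q_2)\ge\sharp_P Q_1+\sharp_P Q_2$, so $\sharp_P$ is superadditive and $F=-\sharp_P$ is subadditive in the sense of \eqref{ineq:subadd}.

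The key step is the $\varLambda$-invariance. Assume $t+(Q\cap\varLambda)=(t+Q)\cap\varLambda$. For $y\in L_P(Q)$ we have $B_R(y)\subset Q$, so the local data $\varLambda\cap B_R(y)$ lies inside $\varLambda\cap Q$; using the hypothesis I would show $\varLambda\cap B_R(y+t)=t+(\varLambda\cap B_R(y))$, and therefore $\supp(y+t,R)=\supp(y,R)=\supp(P)$ together with $B_R(y+t)\subset t+Q$. Thus $y\mapsto y+t$ maps $L_P(Q)$ into $L_P(t+Q)$; since the hypothesis is symmetric under passing from the pair $(t,Q)$ to $(-t,\,t+Q)$, the assignment $y'\mapsto y'-t$ is an inverse, so the map is a bijection and $\sharp_P Q=\sharp_P(t+Q)$, whence $F(Q)=F(t+Q)$.

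With $F=-\sharp_P$ subadditive and $\varLambda$-invariant, (SET) guarantees that $\mu(F)=\lim_{|C|\to\infty}F(C)/|C|$ exists, and consequently $\lim_{|C|\to\infty}\sharp_P C/|C|=-\mu(F)$ exists, which is exactly the asserted frequency. The main obstacle I anticipate is the careful bookkeeping in the invariance step, namely arguing precisely that the patch at a point whose $R$-ball lies inside $Q$ is determined by $\varLambda\cap Q$ and is therefore transported correctly by the translation; everything else is routine once Theorem~\ref{characterization_set} has been invoked.
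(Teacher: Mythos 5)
Your proposal is correct and follows exactly the route the paper takes: invoke Theorem~\ref{characterization_set} to get (SET) from (PQ), then apply (SET) to the subadditive $\varLambda$-invariant function $F(Q)=-\sharp_P Q$. The paper states this in one line; your verification of superadditivity of $\sharp_P$ and of the invariance under translations preserving $Q\cap\varLambda$ simply fills in the details the authors leave implicit.
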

\begin{proof}[\bf Proof of Corollary \ref{cor:cs}]
By Theorem \ref{characterization_set}, $\varLambda$ satisfies (SET).
The claim of Corollary~\ref{cor:cs} follows because\, $F(Q)=-\sharp_P (Q)$
 is a subadditive
function on $\mathcal B$.
\end{proof}

\begin{remark}
 It is known that  the dynamical system
associated to a  Delone set $\varLambda$  satisfying (SET) is uniquely ergodic
\cite{LMS2,LS2}.   In fact, unique ergodicity is equivalent to existence  of pattern frequencies.
Thus, Corollary \ref{cor:cs} shows that (PQ)  is also a sufficient condition for unique ergodicity
of this system.
\end{remark}

We will be concerned with further combinatorial quantities associated with Delone sets
$\varLambda$. These quantities will be discussed next.
\v2

A Delone set $\varLambda$ is called {\em linearly repetitive} (LR)  if
\v{-4}
%%%%%%%%%%%%%%%%%%%%%%%%%%%%%%%%%%%%%%%%%%%%%%%%%%%%%%
\begin{equation}\label{eq:LR}
\sup_{\h{-1.6}\substack{P\in\mathcal P_{\varLambda} \\  r(P)\geq 1}}\limits\!
\tfrac{r_{cov}(L_P^\varLambda)}{r(P)} < \infty.
\end{equation}
\v{-5.6}
\tagr{(LR)}
%%%%%%%%%%%%%%%%%%%%%%%%%%%%%%%%%%%%%%%%%%%%%%%%%%%%%%
\v{5.5}

A Delone set $\varLambda$  is said to satisfy the  {\em repulsion property} (RP) if
\v{-4}
\begin{equation}\label{eq:RP}
\inf_{P\in\mathcal P_{\varLambda}}\limits \ \tfrac{r_{pack}(L_P^\varLambda)}{r(P)}>0.
\end{equation}
\v{-2.5}
\tagr{(RP)}
\v2

We finally need the notion of return
pattern. For subshifts an intense study of this notion has been
carried out in work of Durand \cite{Du, Du2}. The
analogue for Delone sets (or rather tilings) has then been
investigated by Priebe \cite{Pri} (see e.\,g. \cite{PS,LS} as well).

%%%%%%%%%%%%%%%%%%%%%%%%%%%%%%%%%%%%%%%%%%%%%%%%%%%%%%%%%%%%%%%%%%%%%%%%%%%%  page 5
Recall that a  subset    $\Gamma\subset\R^N$   is called discrete if
$\sharp(\Gamma\cap B_R)<\infty$  for all  $R>0$.

Given a discrete subset
$\Gamma\subset\R^n$ of cardinality  $\sharp(\Gamma)\geq2$, the Voronoi
cell of an $x\in \Gamma$  is defined by the formula:
\begin{equation}\label{eq:vor}
V_x (\Gamma):=\{ p\in R^N\hsp{-1}:\, \varrho(p,x)\leq \varrho(p,y), \mbox{ for all
$y\in \Gamma$}\}.
\end{equation}
(Recall that  $ \varrho(\cdot,\cdot)$  stands for the Euclidean distance in $\R^N$). The collection
\[
\widetilde V(\Gamma)=\{V_x(\Gamma)\mid x\in\Gamma\}
\]
is called the Voronoi partition of  $R^N$ corresponding to  $\Gamma$.

Each Voronoi cells  \mbox{$V_x=V_x(\Gamma)$} (an elements of this partition)
is a  convex subset in $\R^N$ with  \mbox{non-empty} interior containing  $x$.

The {\em distortion} $\lambda(V_x)$ of the Voronoi cell   $V_x$  is defined as the ratio
\[
\lambda(V_x)=\tfrac{r_{out}(V_x)}{r_{in} (V_x)}\in [1,\infty]
\]
of the  the {\em outer}\, and {\em inner radia}\,  of  the cell $V_x$
(denoted $r_{out}(V_x)$ and $r_{in}(V_x)$, respectively). These are defined as follows:
\begin{align*}
r_{in}(V_x)\colon\!\!&=\sup\{ R>0\colon\, B_R (x)\subset V_x\}=
\!\!\inf_{y\in \partial V_x}\limits \varrho(x,y)\!=\varrho(x, \partial V_x),\\
r_{out}
(V_x)\colon\!&=\inf\{ R>0\colon\ V_x \subset B_R (x)\}\,=\sup_{y\in \partial V_x}\limits \varrho(x,y).
\end{align*}

If  $\varLambda$ is a Delone set,  Voronoi cells\,    $V_x(\varLambda)\in\widetilde V(\varLambda)$
are easily seen to form compact polytopes  in $\R^N$\!,  and their distortions
are uniformly bounded by  $ \tfrac{r_{cov}(\varLambda)}{r_{pack}(\varLambda)}$.

By a distortion of a Delone set $\varLambda$  we mean the supremum of
distortions of its Voronoi cells:    $\lambda(\varLambda)=\sup_{x\in \varLambda}\limits  \lambda(V_x)$.

It follows  that  the distortion of any  Delone set  $\varLambda$  must be finite:
\[
1<\lambda(\varLambda)\leq  \tfrac{r_{cov}(\varLambda)}{r_{pack}(\varLambda)}<\infty.
\]
 \v1

Assume from now on that  $\varLambda$  is a fixed  repetitive  Delone set.
Then for every ball pattern  $P\in\mathcal P_{\varLambda}$,  the locater set
$L_P=L_P^\varLambda$  (see (2.1)) is itself a Delone set which determines
its own Voronoi partition
\[
\widetilde V_P=\widetilde V(L_P)=\{V_x(L_P)\mid x\in L_P\}.
\]

A Delone set $\varLambda$ is said to satisfy {\em uniformity of return
words}  (U)\,  if the distortions of all its locater sets are uniformly bounded,
i.e., if
%%%%%%%%%%%%%%%%%%%%%%%%
\begin{equation}\label{eq:U}
\sup_{P\in\mathcal P_{\varLambda}}\, \lambda(L_P)<\infty.
\end{equation}
\v{-3}
\tagr{(U)}
\v2
%%%%%%%%%%%%%%%%%%%%%%%%

The following is an equivalent form of the condition (U):
%%%%%%%%%%%%%%%%%%%%%%%%
\[
\sup_{P\in\mathcal P_{\varLambda},\, x\in L_P}  \lambda(V_x(L_P))<\infty.
\]
%%%%%%%%%%%%%%%%%%%%%%%%
%Loosely speaking this means that the Voronoi cells  are ball-like.
\begin{theorem}\label{characterization_lr}
Let $\varLambda$ be a non-periodic Delone set. Then the following
assertions are equivalent:
\begin{itemize}
\item[(i)] The set $\varLambda$ satisfies (LR).
\item[(ii)] The set $\varLambda$ satisfies (PQ) and (U).
\item[(iii)] The  set $\varLambda$ satisfies (PW) and (U).
\end{itemize}
\end{theorem}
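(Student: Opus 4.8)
The plan is to prove the cycle of implications (i)$\Rightarrow$(ii)$\Rightarrow$(iii)$\Rightarrow$(i). The step (ii)$\Rightarrow$(iii) is free: by \eqref{ineq:nunu} we have $w'\le w$, so (PQ) implies (PW) (this is the content of \eqref{eq:pqpw}), and (U) is simply retained. Hence the real work is in (i)$\Rightarrow$(ii) and, above all, in (iii)$\Rightarrow$(i).

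For (i)$\Rightarrow$(ii) let $C_{LR}$ denote the finite value of the supremum in \eqref{eq:LR}, so that $r_{cov}(L_P)\le C_{LR}\,r(P)$ whenever $r(P)\ge1$. To get (PQ), fix such a $P$ with $R=r(P)$ and a large cube $C$. Since $L_P$ is $C_{LR}R$-relatively dense, a maximal $2R$-separated subset of $\{y\in L_P\colon B_R(y)\subset C\}$ has covering radius at most $(C_{LR}+2)R$, and therefore cardinality at least a fixed multiple of $|C|/R^N$; inserting this into \eqref{eq:lowdensity-disjoint} gives $\nu'(P)\ge(C_{LR}+2)^{-N}$, uniformly in $P$, which is (PQ). To get (U) I would first record the (Lagarias--Pleasants/Durand--type) fact that a \emph{non-periodic} (LR) set satisfies (RP): were \eqref{eq:RP} to fail, there would be patterns $P_n$ with two occurrences of their radius-$r(P_n)$ patch at distance $o(r(P_n))$, i.e.\ a short vector leaving $\varLambda$ invariant on a ball of radius $r(P_n)$; feeding this local near-period through (LR) propagates it across overlapping balls, and a limit in the hull then acquires a genuine period, forcing $\varLambda$ to be periodic by minimality --- a contradiction. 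With $c_{RP}>0$ the infimum in \eqref{eq:RP}, the distortion bound $\lambda(L_P)\le r_{cov}(L_P)/r_{pack}(L_P)\le C_{LR}/c_{RP}$ then yields (U). This is the only place where the non-periodicity hypothesis is used.

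The implication (iii)$\Rightarrow$(i) is the crux, and I would argue by contradiction. Assuming (LR) fails, and using (FLC) to see that the offending radii must tend to infinity, one obtains patterns $P_n$ with $R_n=r(P_n)\to\infty$ and $P_n$-free balls $B_{\mathcal R_n}(p_n)$ (containing no centre of a copy of $P_n$) with $\mathcal R_n/R_n\to\infty$; equivalently, $L_{P_n}$ has a Voronoi cell of outer radius $\mathcal R_n$. The governing idea is to play the large hole off against the density of $P_n$: on one side (PW) forces every pattern, in particular $P_n$ and the enclosing patch $\Pi_n=\supp(q_n,\mathcal R_n/2)$ with $q_n\in\varLambda$ near $p_n$, to occur with lower density at least $\tfrac{w}{|B_1|}r(\cdot)^{-N}$; on the other side $\Pi_n$ carries no copy of $P_n$, so every recurrence of $\Pi_n$ reproduces a $P_n$-free ball of radius comparable to $\mathcal R_n$. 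A useful preliminary observation in the same spirit is that (PW) already pins down the minimal complexity growth: summing the frequency lower bound over all patches of a fixed radius $R$ shows that there are only $O(R^N)$ of them.

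The main obstacle --- and the exact point at which the geometric problem is harder than its symbolic ancestor --- is to upgrade this \emph{averaged} information (positive frequencies, linear complexity) into the \emph{pointwise, linear} covering bound demanded by (LR). The danger is that the recurring copies of $\Pi_n$ cluster, so that their $P_n$-free balls overlap and the holes control only a bounded fraction of space at the wrong scale, yielding no contradiction. This is precisely where (U) is indispensable: it bounds the distortion of \emph{every} locater set, hence the shape of every return pattern, and I would use it to guarantee that the recurring holes are genuinely fat and adequately separated at the scale $\mathcal R_n$, so that the frequency bound \eqref{eq:PW} for $\Pi_n$ converts into a quantitative deficit in the number of copies of $P_n$ across large cubes, contradicting \eqref{eq:PW} for $P_n$. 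In the symbolic setting treated by Boshernitzan the return words are intervals and this shape control is automatic, which is why no counterpart of (U) appears there; supplying it geometrically, and closing the resulting quantitative estimate, is the step I expect to demand the most care.
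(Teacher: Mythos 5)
Your treatment of (i)$\Rightarrow$(ii) and (ii)$\Rightarrow$(iii) matches the paper: (LR)$\Rightarrow$(PQ) by extracting a well-separated, still relatively dense family of occurrences (Proposition \ref{pw}), (LR)$\Rightarrow$(RP)$\Rightarrow$(U) via the Solomyak-type argument and the bound $\lambda(L_P)\le r_{cov}(L_P)/r_{pack}(L_P)$ (Propositions \ref{hp} and \ref{u}), and (PQ)$\Rightarrow$(PW) from $w'\le w$. The problem is the crux, (iii)$\Rightarrow$(i), where your sketch has a genuine gap.

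Your plan is a single-scale comparison: a large $P_n$-free ball of radius $\mathcal R_n\gg R_n$ gives a patch $\Pi_n$ of radius $\sim\mathcal R_n$ whose occurrences, by \eqref{eq:PW}, fill a definite fraction of every large cube with $P_n$-free holes, and you hope this ``deficit'' contradicts \eqref{eq:PW} for $P_n$. It does not: (PW) supplies only \emph{lower} bounds on frequencies, so the most this yields is that a fraction $\gtrsim w$ of each cube is $P_n$-free --- which is perfectly consistent with $\sharp_{P_n}C\ge w|C|/|B_{R_n}|$, since the copies of $P_n$ may simply concentrate in the complementary fraction; no hypothesis gives an upper bound on their local density. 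The quantity $\mathcal R_n/R_n\to\infty$ never enters the estimate, so nothing diverges and no contradiction appears, with or without the separation of holes that you want (U) to provide. The paper closes exactly this gap by a \emph{multi-scale} argument (Lemma \ref{lem:rip}): if $x\in L_P$ has its nearest neighbour in $L_P$ at distance $d>4R$, one considers the whole hierarchy of ball patterns $P_k=(x,3R_k)$ for every integer radius $R_k$ between roughly $R$ and $d/3$; occurrences of distinct $P_k$ (or distinct occurrences of the same $P_k$) contribute pairwise disjoint annuli $B_{R_k}(z)\!\setminus\!B_{R_{k-1}}(z)$, so summing the frequency lower bound \eqref{eq:PW} over all these scales gives $1\ge \tfrac{2wN}{6^N}\ln\tfrac{d'+1}{R'+1}$, and the divergence of the harmonic series forces $d\le cR$ with $c$ explicit. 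Only then is (U) used --- once, and in a different way than you propose: not to separate holes, but to convert the resulting bound on $r_{in}(V_x(L_P))$ (equivalently, on nearest-neighbour gaps in $L_P$) into a bound on $r_{out}(V_x(L_P))$, hence on $r_{cov}(L_P)$, which is (LR). To repair your argument you would need to introduce this summation over the $\log(\mathcal R_n/R_n)$ intermediate scales (or an equivalent telescoping mechanism); as written, the single-scale deficit cannot produce a contradiction.
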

%%%%%%%%%%%%%%%%%%%%%%%%%%%%%%%%%
For definitions of the properties (LR), (PQ), (PW)
and (U) of a Delone set $\varLambda$ see \eqref{eq:LR}, \eqref{eq:PQ}, \eqref{eq:PW}   and
\eqref{eq:U}, respectively.
%%%%%%%%%%%%%%%%%%%%%%%%%%%%%%%%%%
\begin{remark}
Note that a Delone set satisfying (LR) or (PQ) or (PW)  must be
repetitive and hence satisfy FLC (see \eqref{eq:flc1}).
\end{remark}

In Section \ref{subadditive}, we provide a proof for Theorem
\ref{characterization_set}. Section \ref{lr} deals with
Theorem~\ref{characterization_lr}.

%%%%%%%%%%%%%%%%%%%%%%%%%%%%%%%%%%

%%%%%%%%%%%%%%%%%%%%%%%%%%%%%%%%%%
\section{A subadditive ergodic theorem}  \label{subadditive}
This section is concerned with Theorem \ref{characterization_set}.

\begin{proof}[Proof of Theorem \ref{characterization_set}]
We have to show that the conditions (SET) and (PQ) are equivalent.
The analogue result in the setting of symbolic dynamics is given in
\cite{Len}. Here, we indicate the necessary modifications.

\smallskip

(PQ) $\Rightarrow$ (SET): The proof given in \cite{Len} is
easily carried over to show that (PQ) implies existence of the limit \
$\lim_{ |C|\to\infty}\limits  \frac{F(C)}{|C|}$  where  $C$  runs over the cubes in $\R^N$.
A~similar reasoning can also be found in the proof of (LR) $\Rightarrow$ (SET)
given in \cite{DL}.
%%%%%%%%%%%%%%%%%%%%%%%%%%%%%%%%%%

(SET) $\Rightarrow$ (PQ): Assume to the contrary  that
(SET) holds but (PQ) fails.

%%%%%%%%%%%%%%%%%%%%%%%%%%%%%%%%%%

Since (PQ) does not hold,
there exists a sequence $(P_n)$  of  ball patterns with  $\nu'(P_n)\to 0$
(see \eqref{eq:lowdensity-disjoint}).   By repetitivity,   $\nu'(P_n)>0$ for all  $n$
(see \eqref{ineq:nupp}).
The FLC of $\varLambda$ implies   $r(P_n)\to\infty$.

%%%%%%%%%%%%%%%%%%%%%%%%%%%%%%%%%%
%\v1\hr\v1
%{\Small   \h9   aaaaa
%
%As, by assumption, (PQ) does not hold, there exists a sequence $(P_n)$ of
%ball patterns with
%\begin{equation}\label{star}
%\nu_{P_n} \longrightarrow 0, \;\: n\to \infty.
%\end{equation}
%By repetitivity, each $\nu_P$ is positive. Thus, we can conlude from
%finite local complexity that
%\begin{equation}
%\label{starstar} r(P_n)\longrightarrow \infty,\:\;n\to
%\infty.
%\end{equation}
%
%  \h9   zzzzzz}
% \v1\hr\v1
%%%%%%%%%%%%%%%%%%%%%%%%%%%%%%%%%%
   With notation as in Section 2,  define the real valued functions
$F_n\colon\mathcal B\to\R$\,  by
\[
F_n(Q) := \sharp'_{P_n}\!(Q)\,|B_{r(P_n)}| \qquad (n\geq1).
\]
(Recall that  $\mathcal B$  stands for the family of all bounded subsets
of $\R^{n}$, and $\sharp'_{P_n}\!(Q)$  is defined as in~\eqref{eq:card2}).

Observe that each function  $-F_n$  is subadditive
and also   $\varLambda$-invariant. By  (SET),  the limits\,
$\h{-2.5}\lim_{\h2 |C|\to \infty}\limits \h{-1.5}\tfrac{F_n(C)}{|C|}$
exist  and are equal to \ $\nu (P_n)=\mu(F)$ (see \eqref{eq:SET}).

Since $\nu(P_n)\to0$,   it follows that
\[
\nu (P_n)= \mu(F_n)=\h{-3}\lim_{\h2 |C|\to \infty}\limits \h{-2}\tfrac{F_n(C)}{|C|}\to0 \qquad(n\to\infty).
\]
\v{-1}
Select now a  positive\, $\varepsilon<\frac{|B_1|}{2^N}$.  Using the relations  $\nu(P_n)\to0$  and $r(P_n)\to\infty$,
 one constructs inductively   a subsequence  $(P_{n_k})$ of  $(P_n)$  such that\\[-3mm]
\[
\tfrac1{|C|}\sum_{j=1}^k\limits  F_{n_j}(C)<\varepsilon  \qquad \text{\small and} \qquad   r(P_{n_{_{k+1}}})>r(P_{n_{k}}),
\]
for all  $k\geq1$  and all   cubes  $C$  with sidelengths\,   $s(C)\geq \frac12\,r(P_{n_{_{k+1}}})$.

Define the function $F\!\colon \mathcal B\to\R$ by
$
F(Q)\!:=\sum_{j=1}^\infty\limits F_{n_j}\!(Q).
$

%%%%%%%%%%%%%%%%%%%%%%%%%%%%%%%%%%%%%%%%%%%%%%%%%%%%%%%%%%%%%%%%%%%%%%%%%%%%%%%%%%%%%%%%
Since   $r(P_n)\to\infty$, only finitely many terms in this sum do  not vanish.
As each function $-F_n$ is subadditive and $\varLambda$-invariant, so is $-F$.
Therefore,  (SET)  implies existence of the limit\,
$
\mu(F)=\lim_{|C|\to \infty}\limits \tfrac{F(C)}{|C|}.
$
%%%%%%%%%%%%%%%%%%%%%%%%%%%%%%%%%%%%%%%%%%%%%%%%%%%%%%%%%%%%%%%%%%%%%%%%%%%%%%%%%%%%%%%%

%%%%%%%%%%%%%%%%%%%%%%%%%%%%%%%%%%%%%%%%%%%%
For any  $k\geq1$ and for any cube $C$ with its sidelength  $s(C)$  satisfying
the inequalities\, $\tfrac{1}{2}r(P_{n_{_{k+1}} }) \leq s(C) < 2r(P_{n_{_{k+1}} })$,  we obtain  \v{-3}
\[
\tfrac{F(C)}{|C|} =  \sum_{j=1}^\infty \tfrac{F_{n_j} (C)}{|C|} = \sum_{j=1}^k \tfrac{F_{n_j} (C)}{|C|} < \varepsilon.
\]

On another hand, for  a cube $C$ with sidelength $ 2r(P_{n_k})$ and
center of mass in  $x\in L_{P_{n_k}}$,   we have  \v{-2}
\[
\tfrac{F(C)}{|C|} \geq \tfrac{F_{n_k} (C)}{|C|}  \geq \tfrac{|B_{r(P_{n_k})}|}{|C|} =
 \tfrac{|r(P_{n_k})|^N\cdot |B_1|}{|2r(P_{n_k})|^N}= \tfrac{|B_1|}{2^N}.
 \]
 \v1
It follows that \
$
\frac{|B_1|}{2^N}\leq \mu(F)\leq \varepsilon,
$
contrary to the choice of \ $\varepsilon$.

This completes the proof of Theorem \ref{characterization_set}.
\end{proof}
%%%%%%%%%%%%%%%%%%%%%%%%%%%%%%%%%%%%%%%%%    PAGE  7
  %%%%%%%%    PAGE  7

\section{Linear repetitive Delone sets}  \label{lr}

In this section Theorem \ref{characterization_lr} is proved.
We need several auxiliary results.
%%%%%%%%%%%%%%%%%%%%%%%%%%%%%%%%%%%%%%%%%
\begin{prop} \label{hp} Let $\varLambda$ be a non-periodic
Delone set.  Then (LR) implies (RP).
\end{prop}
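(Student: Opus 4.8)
\emph{The plan is to argue by contraposition}: assuming $\varLambda$ is non-periodic and satisfies (LR) but violates (RP), I will manufacture a forbidden repetition. First I would unwind the failure of (RP). It produces ball patterns $P_n=(x_n,R_n)\in\mathcal P_\varLambda$ together with two points $y_n,z_n\in L_{P_n}$ realizing the minimal gap of $L_{P_n}$, so that $t_n:=z_n-y_n$ satisfies $|t_n|=2\,r_{pack}(L_{P_n})$ and $|t_n|/R_n\to0$. Since $y_n,z_n$ are distinct points of $\varLambda$ we have $|t_n|\ge 2\,r_{pack}(\varLambda)>0$, and (FLC) forces $R_n\to\infty$. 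The equality $\supp(y_n,R_n)=\supp(z_n,R_n)$ unwinds to the statement that $q\in\varLambda\iff q+t_n\in\varLambda$ for every $q\in B_{R_n}(c_n)$, where $c_n:=y_n$; that is, $\varLambda$ is $t_n$-periodic on $B_{R_n}(c_n)$, and the number of periods $R_n/|t_n|$ tends to infinity.

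The heart of the argument is a maximal-periodicity/defect construction that never passes to a limit — this is deliberate, since when $|t_n|\to\infty$ the natural rescaling destroys the Delone structure, so a naive compactness argument is the main trap to avoid. Let $S_n$ be the supremum of all $S$ for which $q\in\varLambda\iff q+t_n\in\varLambda$ holds throughout $B_S(c_n)$. Non-periodicity of $\varLambda$ applied to the nonzero vector $t_n$ gives $S_n<\infty$, while the previous paragraph gives $S_n\ge R_n$, hence $S_n/|t_n|\to\infty$. By maximality there is a \emph{defect} point $w_n$ with $S_n\le|w_n-c_n|\le S_n+|t_n|$ at which $t_n$-periodicity fails, i.e. exactly one of $w_n,\,w_n+t_n$ lies in $\varLambda$. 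Choosing $u_n\in\varLambda$ with $|u_n-w_n|\le r_{cov}(\varLambda)$, I set $D_n:=\supp(u_n,\rho_n)$ with $\rho_n:=r_{cov}(\varLambda)+2|t_n|$, so that $\rho_n=O(|t_n|)$ and both $w_n$ and $w_n+t_n$ lie in $B_{\rho_n}(u_n)$ with $w_n\in B_{\rho_n-|t_n|}(u_n)$.

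The key observation is that ``being $t_n$-periodic'' is a translation-invariant property of a patch: for $\supp(p,\rho)$ it is the condition that $v\in\supp(p,\rho)\iff v+t_n\in\supp(p,\rho)$ for all $v\in B_{\rho-|t_n|}$, which depends only on the patch and on $t_n$, not on the center $p$. By construction $D_n$ is \emph{not} $t_n$-periodic, the defect $w_n$ witnessing the failure, whereas every patch $\supp(p,\rho_n)$ whose ball $B_{\rho_n}(p)$ lies inside $B_{S_n}(c_n)$ \emph{is} $t_n$-periodic, since there $\varLambda$ carries the period $t_n$. Finally I invoke (LR) applied to $D_n$: as the radii $\rho_n$ are bounded below by a positive constant, (LR) yields a uniform $C$ with $r_{cov}(L_{D_n})\le C\rho_n$, so some copy $p^\ast\in L_{D_n}$ satisfies $|p^\ast-c_n|\le C\rho_n=O(|t_n|)$. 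Because $S_n/|t_n|\to\infty$, for large $n$ we get $(C+1)\rho_n\le S_n$, whence $B_{\rho_n}(p^\ast)\subset B_{S_n}(c_n)$ and the patch at $p^\ast$ is $t_n$-periodic; but it equals $D_n$, which is not. This contradiction establishes (RP).

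I expect two delicate points — rather than any single hard step — to carry the weight: (i) verifying cleanly that $t_n$-periodicity is a property of the patch, so that every copy of $D_n$ inherits the non-periodicity while all sufficiently interior patches are periodic; and (ii) the bookkeeping that keeps $\rho_n=O(|t_n|)$, so that the \emph{linear} repetition gap $C\rho_n$ is eventually negligible against the periodicity radius $S_n$. Everything else is routine, and the decisive feature is that the whole scheme is finitary, sidestepping the difficulties a limiting argument would face when $|t_n|$ is unbounded.
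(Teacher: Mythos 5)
Your argument is correct. Note first that the paper does not actually prove Proposition \ref{hp}; it defers entirely to Lemma 2.4 of \cite{Sol} (as adapted to Delone sets in Lemma 2.1 of \cite{Len2}). What you have written is, in essence, a self-contained reconstruction of that cited argument: a failure of (RP) produces a short vector $t_n$ under which $\varLambda$ is periodic on a ball whose radius is a growing multiple of $|t_n|$; non-periodicity supplies a defect; and linear repetitivity plants a copy of the defect patch (of radius $O(|t_n|)$) deep inside the periodic region, where it cannot exist because $t_n$-periodicity of a patch $\supp(p,\rho)$ restricted to $B_{\rho-|t_n|}$ is indeed a translation-invariant property of the patch. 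The logic is sound, and you correctly identify and avoid the trap of a compactness/limiting argument. Three small points deserve a line each in a polished write-up. (i) You do not need the infimum defining $r_{pack}(L_{P_n})$ to be attained (though it is, by FLC): any pair $y_n,z_n\in L_{P_n}$ with $\varrho(y_n,z_n)/R_n\to 0$ serves. (ii) The paper's (LR) is a supremum over $r(P)\geq 1$ only, while your radii $\rho_n=r_{cov}(\varLambda)+2|t_n|$ are merely bounded below by a positive constant; one should remark that for $\epsilon\leq r(P)<1$ one has $L_P\supset L_{(x,1)}$, so the supremum over $r(P)\geq\epsilon$ is still finite. (iii) The assertion that $R_n\to\infty$ follows already from $|t_n|\geq 2\,r_{pack}(\varLambda)>0$ together with $|t_n|/R_n\to 0$; FLC is not needed there. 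None of these affects the validity of the proof.
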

%%%%%%%%%%%%%%%%%%%%%%%%%%%%%%%%%%%%%%%%%
For the definitions of the properties (LR) and (RP) see
 \eqref{eq:LR} and \eqref{eq:RP}.
%%%%%%%%%%%%%%%%%%%%%%%%%%%%%%%%%%%%%%%%%
\begin{proof}  For linearly repetitive non-periodic  tilings this  (and in fact a slightly
stronger result) is proven in Lemma 2.4 of  \cite{Sol}. That proof
carries easily over to Delone sets, as discussed in  Lemma 2.1 in
\cite{Len2} (see \cite{LP2} and \cite{Du2} as well).

\end{proof}
%%%%%%%%%%%%%%%%%%%%%%%%%%%%%%%%%%%%%%%%%

\begin{prop}\label{u}
Let $\varLambda$ be a non-periodic
Delone set.  Then, (LR) implies (U).
% (see \eqref{eq:U}).
\end{prop}
\begin{proof}  By Propositon \ref{hp}, $\varLambda$ satisfies (RP).
Now, from (LR) and (RP) there follows existence of positive constants\, $c$\, and\, $C$\,  depending only on $\varLambda$ such that the  inequalities\,  $r_{pack}(L_{P})\geq cr(P)$\, and\, $r_{cov}(L_{P})<Cr(P)$ hold for all ball patterns $P$. These   imply that
$\frac{r_{cov}(L_{P})}{r_{pack}(L_{P})}<\tfrac Cc$.
For $x\in L_P$ we have the following inclusions for
the set  $V_x(L_P)$ (defined in \eqref{eq:vor}):
\[
B\big(x,r_{pack}(L_P)\big)\subset V_x(L_P)\subset
B\big(x,r_{cov}(L_P)\big),
\]
see Corollary 5.2 in \cite{Sen} for details.
The estimate
\[
\lambda(V_x(P))=\tfrac{r_{out}(V_x(P))}{r_{in} (V_x(P))}\leq
\tfrac{r_{cov}(L_{P})}{r_{pack}(L_{P})}\leq
\tfrac{Cr(P)}{cr(P)}<\tfrac Cc,
\]
implies $\lambda(L_{P})\leq \tfrac Cc$ which completes validation of (U)
(see \eqref{eq:U}).
\end{proof}

\begin{prop} \label{pw}
Let $\varLambda$ be a
Delone set.  Then (LR) implies (PQ).
\end{prop}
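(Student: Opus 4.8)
The plan is to show that linear repetitivity gives a uniform lower bound on the reduced density $\nu'(P)$ for every ball pattern $P$ with $r(P) \geq 1$, which is exactly the content of (PQ). The natural route is to combine the covering bound coming directly from (LR) with a disjoint-packing argument: (LR) tells us that copies of $P$ recur densely (the covering radius of the locater set $L_P$ is at most $C \cdot r(P)$), and I need to convert this \emph{frequency of occurrence} into a lower bound on the number of \emph{disjoint} copies fitting inside a large cube $C$.

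First I would invoke (LR) to extract a constant $C_0$, depending only on $\varLambda$, with $r_{cov}(L_P) \leq C_0\, r(P)$ for all $P$ with $r(P) \geq 1$. The covering-radius bound means that every point of $\R^N$ lies within distance $C_0\, r(P)$ of some point of $L_P$; equivalently, the translates $\supp(P)$ of the patch recur with a gap no larger than $C_0\, r(P)$. From this I would establish a lower bound on $\sharp'_P C$ for a large cube $C$. The key geometric idea is to tile (most of) the cube $C$ by a grid of sub-cubes of sidelength comparable to $C_0\, r(P)$; by the covering property each such sub-cube contains at least one element of $L_P$, and by spacing these sub-cubes so that any two chosen representatives are separated by more than $2\,r(P)$, the selected copies are pairwise disjoint in the sense required by \eqref{eq:card2}. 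A standard counting argument then yields $\sharp'_P C \geq \kappa \cdot |C| / (C_0\, r(P))^N$ for all sufficiently large cubes $C$, where $\kappa > 0$ absorbs the packing constants and the volume of the unit ball.

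Feeding this into the definition \eqref{eq:lowdensity-disjoint} of $\nu'(P)$ gives
\[
\nu'(P) = |B_1| \cdot \liminf_{|C| \to \infty} \frac{\sharp'_P C \cdot r(P)^N}{|C|}
 \geq |B_1| \cdot \kappa \cdot \frac{r(P)^N}{(C_0\, r(P))^N}
 = \frac{|B_1|\,\kappa}{C_0^{\,N}},
\]
a bound that is uniform over all $P$ with $r(P) \geq 1$ because the right-hand side no longer depends on $P$. Taking the infimum over such $P$ shows $w' \geq |B_1|\,\kappa / C_0^{\,N} > 0$, which is precisely (PQ) as defined in \eqref{eq:PQ}.

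\emph{The main obstacle} I anticipate is the disjointness bookkeeping in the packing step: the covering estimate alone only guarantees one nearby copy per sub-cube, but to count \emph{completely disjoint} copies (separation strictly greater than $2\,r(P)$) I must choose a sub-grid coarse enough that representatives picked from distinct sub-cubes cannot lie within $2\,r(P)$ of each other, while keeping it fine enough that the number of sub-cubes is still of order $|C| / r(P)^N$. Getting the sidelength of the sub-cubes right — large enough to force the $2\,r(P)$ separation yet controlled by a multiple of $r_{cov}(L_P) \leq C_0\, r(P)$ — is the delicate point, but since both scales are comparable multiples of $r(P)$, the ratio and hence $\kappa$ depend only on $N$ and $C_0$, not on $P$. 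I would also note that (LR) forces repetitivity (so $\nu'(P) > 0$ for each individual $P$ by \eqref{ineq:nupp}), which reassures us that the liminf is genuinely positive before the uniform argument even begins.
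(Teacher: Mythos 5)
Your proposal is correct and follows essentially the same route as the paper: partition a large cube into sub-cubes of sidelength a fixed multiple of $r(P)$, use the covering bound $r_{cov}(L_P)\leq C_0\,r(P)$ from (LR) to place one copy of $P$ in (the middle of) each sub-cube, and arrange the spacing so these copies are completely disjoint, yielding $\sharp'_P C\gtrsim |C|/r(P)^N$ and hence a uniform lower bound on $\nu'(P)$. The paper's version makes the disjointness bookkeeping explicit by taking sub-cubes of sidelength $3CR$ each containing a ``middle'' cube of sidelength $CR$ at distance $CR$ from the boundary, which is exactly the coarse/fine grid balance you identify as the delicate point.
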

\begin{proof}
 By definition, (LR) implies that every pattern $P=(x,R)\in \mathcal P$  occurs in
 every box with sidelength $C R$.  We can now partition any sufficiently large cube  into smaller cubes of sidelength $3 C R$ up to its boundary. Each of
 these smaller cubes contains a cube of sidelength $ C R$ ``in the middle'',
 i.\,e. with distance $CR$ to the boundary. Choosing a copy of $P$ in each
  of these middle cubes, we easily obtain the statement.
\end{proof}
%%%%%%%%%%%%%%%%%%%%
We will need two further lemmas before we can give the proof of the second main theorem.

\begin{lemma}\label{lem1} Let $m>n\geq1$ be integer numbers. Then \
$\sum_{k=n}^{m}\limits\frac{1}{k}\geq
\ln\!\left(\frac{m+1}{n}\right)$.
\end{lemma}
\v{-3}
%%%%%%%%%%%%%%%%%%%%
\begin{proof} As the function $f(x)=\frac{1}{x}$ is decreasing on
the the interval\, $]0,+\infty[$\, we find \\
$\sum_{k=n}^{m}\limits\frac{1}{k}\geq\int_{n}^{m+1}\limits\frac{dx}{x}=\ln\!\big(\frac{m+1}{n}\big)$.
\end{proof}
%%%%%%%%%%%%%%%%%%%%
%%%%%%%%%%%%%%%%%%%%%%%%%%%%%%%%%%%%%%%%%%%%%%%%%%%%%%%%%%%%%%%%%%%%%%%%%%%%%%%%%%%%
%%%%%%%%%%%%%%%%%%%%
\begin{lemma}\label{lem:rip}
Let $\varLambda$ be a Delone set satisfying  (PW) and (U)
and  let
\[
P=(x_{0},R)\in\varLambda\times\R^+=\mathcal P_{\varLambda}=\mathcal P
\]
be a ball pattern of radius  $R:=r(P)\geq3$. Then, for all points $x\!\in\! L_P$
(see \eqref{eq:loc}) the following inequality holds
\begin{equation}\label{eq:l45}
r_{in}\big(V_x(L_P)\big)\leq cR,
\end{equation}
where~\,$c=\max\!\left(2, 4\exp\!\big(\tfrac{6^N}{2wN}\big)\right)$,  and the constant\, $w$\,
is coming from the definition of (PW).
%an absolute constant  depending only on dimension $N$. \v2
\end{lemma}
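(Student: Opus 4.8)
The plan is to bound $r_{in}(V_x(L_P))$ by controlling how large a ball around a point of $L_P$ can be kept free of other points of $L_P$. The quantity $r_{in}(V_x(L_P))$ is exactly the distance from $x$ to the nearest other point of $L_P$ (up to a factor of $2$), so the inequality $r_{in}(V_x(L_P)) \leq cR$ is equivalent to asserting that near any occurrence of the pattern $P$ there must be another occurrence within distance proportional to $R$. First I would fix $x \in L_P$ and suppose, toward a contradiction or toward a direct estimate, that $r_{in}(V_x(L_P)) = \rho$ is large compared to $R$. The key idea is to look at an increasing family of enlarged ball patterns centered near $x$, namely patterns $P_j = (x, R_j)$ with radii $R_j$ ranging over a geometric or dyadic scale between $R$ and $\rho$, and to track how the condition (U) forces these larger patterns to have locater sets whose geometry is uniformly controlled.

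The main mechanism I expect to drive the proof is a packing/counting argument combined with (PW). Condition (PW) guarantees $\nu(P_j) \geq w$ for every $P_j$ with $r(P_j) \geq 1$, which means copies of $P_j$ occur with density at least $w \cdot |B_1|^{-1} R_j^{-N}$ (reading off the normalization in \eqref{eq:lowdensity}). If the inner radius at $x$ were very large, then in the annular region between radius $R$ and radius $\rho$ around $x$ there would be no competing point of $L_P$, yet (PW) forces many copies of the larger patterns to appear there. The tension between "density at least $w R_j^{-N}$ of copies of $P_j$" summed over a range of scales $R_j$ and the "emptiness" imposed by a large inner radius is what produces the bound. This is precisely where Lemma \ref{lem1} enters: summing the reciprocal-type lower bounds $\tfrac{1}{R_j} \sim \tfrac{1}{k}$ across a dyadic range of scales from $R$ up to $\rho$ yields a logarithmic lower bound $\ln(\rho/R)$ on an accumulated count, and this must stay below a constant governed by $w$, $N$, and the packing constant from (U). Solving the resulting inequality $\ln(\rho/R) \leq \tfrac{6^N}{2wN}$ gives $\rho \leq 4\exp\!\big(\tfrac{6^N}{2wN}\big) R$, matching the stated constant $c$ (with the factor $4$ and the $\max$ with $2$ absorbing small-scale and boundary corrections).

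The role of (U) is to convert density information about copies of the enlarged patterns into genuine geometric separation: without uniform control of the distortion of the locater sets $L_{P_j}$, a high density of copies need not force a copy to land in the specific empty annulus around $x$, since the copies could cluster. Uniformity of return words ensures each $L_{P_j}$ is "round" — its Voronoi cells have bounded distortion — so that the points of $L_{P_j}$ are spread out comparably to a lattice at scale $R_j$, and a density lower bound then genuinely forces occupancy of every region of diameter a bounded multiple of $R_j$. The hardest step, and the one I would spend the most care on, is making the multiscale summation rigorous: choosing the correct discrete family of radii $R_j$ so that the per-scale contributions are provably at least $\tfrac{c'}{k}$, ensuring the copies counted at different scales do not overlap or double-count, and verifying that the emptiness of the ball $B_\rho(x) \setminus \{x\}$ of $L_P$-points genuinely bounds all these contributions simultaneously. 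Once the bookkeeping is arranged so that Lemma \ref{lem1} applies to lower-bound the accumulated count by $\ln(\rho/R)$ while (PW) and (U) upper-bound it by $\tfrac{6^N}{2wN}$, the stated estimate \eqref{eq:l45} follows immediately.
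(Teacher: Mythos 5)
Your proposal correctly identifies the shape of the argument --- reduce $r_{in}(V_x(L_P))$ to the distance $d$ from $x$ to the nearest other point of $L_P$, introduce a one-parameter family of enlarged ball patterns centered at $x$, use (PW) to lower-bound their densities, and extract a harmonic sum via Lemma \ref{lem1} whose logarithm forces $d\leq cR$. The target constant and the role of the exponential are read off correctly. But the central mechanism you propose does not work, and it is not the paper's. You want (PW) plus (U) to \emph{force a copy of an enlarged pattern to land inside the empty annulus around $x$}, arguing that bounded distortion of the Voronoi cells of $L_{P_j}$ makes a density lower bound imply occupancy of every region of diameter comparable to $R_j$. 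That inference is false: bounded distortion controls the \emph{shape} of the Voronoi cells of $L_{P_j}$ but not their absolute size, and a lower bound on average density does not exclude individual large gaps. To force occupancy of a prescribed region you would need an upper bound on $r_{cov}(L_{P_j})$ --- which is exactly the linear repetitivity one is trying to prove, so the argument as described is circular. (In fact the paper's proof of this lemma uses only (PW); (U) enters only afterwards, in the proof of Theorem \ref{characterization_lr}, to pass from $r_{in}$ to $r_{out}$.)

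The missing idea is a \emph{global packing} argument that never requires any copy to occupy a specific location. The paper takes radii in unit increments, $R_k=R'+k$ for $R'=E(R)$ up to $d'=E(d/3)$, and patterns $P_k=(x,3R_k)$, and proves (Facts 2--5) that distinct centers $z\in L_{P_i}$, $y\in L_{P_j}$ carry pairwise disjoint thin annuli $B_{R_i}(z)\setminus B_{R_{i-1}}(z)$ and $B_{R_j}(y)\setminus B_{R_{j-1}}(y)$; the disjointness comes from the fact that the patch of $P_i$ at $z$ forces $L_P\cap B_{2R_i}(z)=\{z\}$ (because $3R_i\leq d$ and the patch at $x$ has no other $L_P$-point within distance $d$). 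Summing the volumes of these disjoint annuli over all copies in a large cube and dividing by the cube's volume, (PW) gives $1\geq\frac{w}{3^N}\sum_{s=R'+1}^{d'}\frac{s^N-(s-1)^N}{s^N}$, and only then does Lemma \ref{lem1} produce $\ln\bigl(\frac{d'+1}{R'+1}\bigr)$. Note also that your ``dyadic scales contributing $\sim 1/k$'' is internally inconsistent: dyadic scales would contribute a constant per scale (and only $\log_2(d/R)$ scales), whereas the $1/s$ terms that Lemma \ref{lem1} is designed for come precisely from the unit-width annuli, whose volume is of order $s^{N-1}$ against a density of order $s^{-N}$. Without the disjointness facts your multiscale count has no upper bound to collide with the logarithmic lower bound, so the proof as proposed has a genuine gap.
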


\begin{proof}
%[\bf Proof of Lemma \ref{lem:rip}]
Set\, $d$\,  to be the distance from \ $x$ \ to the closest point  in the set
$L_P^{\Lambda}\!\setminus\! \{x\}$:
\[
d\colon\!\!=\min_{\substack{y\in L_P^\Lambda\\y\neq x}}\limits \varrho(x,y).
\]

We may assume that \ $d>4R$ \  because otherwise the inequality in  \eqref{eq:l45}
holds with $c=2$.

For $t\in\R$,  let $E(t)$ denote the greatest integer smaller than $t$. Set
\begin{align*}
d'&=E(\tfrac d3),  &&R'=E(R), \\
m&=d'-R',  &&R_k=R'+k  \quad (k\geq0).
\end{align*}

Since  $\frac d3-R>\frac R3\geq1$, it follows that\, $m\geq1$, and we have
\[
2<3\leq R'=R_0\leq R< R_1<\ldots<R_m=d'\leq\tfrac d3.
\]
Consider the following  $m$  ball patterns:
\[
P_{k}=(x, 3R_k)\in\mathcal P, \quad   1\leq k\leq m.
\]
%%%%%%%%%%%%%%%%%%%%%%%%%%%%%%%%%%%%%%%%%%%%%%%%%%%%%%%%%%%%%%%%%%%%%%%%%%%%%%%%%
We list the following observations (Facts 1-5):\v2
\ni {\bf Fact 1}. The following inclusions take place:
\[
x\in L_{P_i}\subset L_{P_j} \subset L_{P} \quad  (\text{for }\,1\leq j\leq i\leq m).
\]

\ni {\bf Fact 2}. For $z\in L_{P_i}$ with $1\leq i\leq k$, we have
\  $L_P\cap B_{2R_i}(z)=\{z\}$.\v1

In fact even more is true:\v2

\ni {\bf Fact 3}. For $z\in L_{P_i}$ with $1\leq i\leq k$, we have
\[
(L_P-z)\cap B_{2R_i}\stackrel{_{1}}=(L_P-x)\cap B_{2R_i}\stackrel{_{2}}=\{0\}.
\]

Indeed, the first equality $\stackrel{_{1}}=$  holds since $R=r(P)\leq R_{i}$,
and both\, \mbox{$x,z\in L_{P_i}$}.
The second equality $\stackrel{_{2}}=$ follows from the inequalities
$2R_i<3R_i\leq 3R_m<d$. \v2

\ni {\bf Fact 4}. If $z\in L_{P_i}$ and $y\in L_{P_j}$ with  $z\neq y$\,  and\,
$i,j\in\{1,2,\ldots,m\}$, then
\begin{equation}\label{eq:disj}
\Big(B_{R_i}(z)\minus B_{R_{i-1}}(z)\Big)\ \bigcap \
\Big((B_{R_j}(y)\minus B_{R_{j-1}}(y))\Big)=\emptyset.
\end{equation}

Indeed, assuming (without loss of generality) that $i\geq j$,  we have
$y\in L_{P_{j}}\subset L_{P}$ (Fact 1) and, in view of the Fact 2,
we obtain $\varrho(x,y)>2R_i$, whence the claim of Fact 4 follows.
The next observation (Fact 5) is obvious.

\v2
\ni {\bf Fact 5}. If $i\neq j$ and  $z\in L_{P_i}\cap L_{P_j}$ with
$i,j\in\{1,2,\ldots,m\}$, then the equality \eqref{eq:disj} takes place.

%%%%%%%%%%%%%%%%%%%%%%%%%%%%%%%%%%%%%%%%%%%%%%%%%%%%%%%%%%%%%%%%%%%%%%%%%%%%%%%%%
Given the disjointness observations \eqref{eq:disj} (Facts 4 and 5),
the inequality \vsp{-2}
\[
\sum_{k=1}^m\,
\sharp_{P_k}\big(\Phi_{d'}(C)\big)\
|B_{R_k}\!\minus B_{R_{k-1}}|\leq |C|
\]
holds for any cube  $C$. Here, for $Q\subset \R^N$ and $s>0$,  we denote by
$\Phi_s(Q)$  the set of points of $Q$ with distance at least $s$ to the
boundary of $Q$.\vsp2
%%%%%%%%%%%%%%%%%%%%%%%%%%%%%%%%%%%%%%%%%%%%%%%%%%%%%%%%%%%%%%%%%%%%%%%%%%%%%%%%%

Let  $C_n$  be an arbitrary sequence of cubes with $|C_n|\to\infty$. Then
\[
1\geq\sum_{k=1}^m\,
\frac{\sharp_{P_k}\big(\Phi_{d'}(C_{n})\big)}{|C_n|}\ |B_{R_k}
\!\minus B_{R_{k-1}}|, \qquad \text{for all }n.
\]
%Given the disjointness observations \eqref{eq:disj} (Facts 4 and 5),
%we infer that for any sequence of cubes $C_n$ with minimal sidelength approaching
%infinity  we have
%\[
%1\geq\sum_{k=1}^m\,
%\frac{\sharp_{P_k}\big(\Phi_{d'}(C_{n})\big)}{|C_n|}\ |B_{R_k}
%\!\minus B_{R_{k-1}}|
%\]
%for any $n$. Here, for $Q\subset \R^N$ and $s>0$,  we denote by  $\Phi_s(Q)$
%the set of points of $Q$ with distance at least $s$ to the boundary of $Q$.

By letting  $n\to \infty$ we obtain from (PW)  (see  \eqref{eq:PW} for definition of (PW))
\begin{equation}\label{eq1} 1\geq \sum_{k=1}^{m}
\frac{w}{|P_k|}|B_{R_k}\!\minus B_{R_{k-1}}|\geq \frac{w}{3^N}\cdot\!\!
\sum_{s=R'+1}^{d'}\!\!\frac{s^N -(s-1)^N}{s^N}.
\end{equation}

%%%%%%%%%%%%%%%%%%%%%
%\begin{align*}
%P_1&=(\Lambda -x)\cap B_{3\big(E(R)+1\big)}(x)\\
%P_2&=(\Lambda -x)\cap B_{3\big(E(R)+2\big)}(x)\\[-2mm]
%&\h{10}\vdots \\
%P_k&=(\Lambda -x)\cap B_{3\big(E(R)+k\big)}(x)\\[-2mm]
%&\h{10} \vdots \\
%P_{E(\frac{d}{3})-E(R)}&=(\Lambda -x)\cap
%       B_{3\big(E(\frac{d}{3})\big)}(x)
%\end{align*}
%%%%%%%%%%%%%%%%%%%%%

Using the inequalities \
$
\frac{s^N -(s-1)^N}{s^N}>\frac{N(s-1)^{N-1}}{s^N}>\frac{N(s/2)^{N-1}}{s^N}
=\frac{N}{s\cdot2^{N-1}}\,
$
with $s\geq2$ and then Lemma \ref{lem1}, we can  rewrite \eqref{eq1}
in the form
\[1\geq\frac{w}{3^N}\cdot\frac{N}{2^ {N-1}}\cdot \h{-1}
\sum_{s=R'+1}^{d'}\frac{1}{s}\geq\frac{2wN}{6^N}\cdot
\ln\!\Big(\frac{d' + 1 }{R' + 1}\Big).
\]
It follows that \ $\frac d3\leq d'+1\leq (R'+1)\cdot\exp\!\Big(\frac{6^N}{2wN}\Big)$.
In view of the inequalities  $R'+1<R+1\leq \frac{4R}3$  (which hold for $R\geq3$),
we obtain
\[
d<4R\,\exp\!\Big(\tfrac{6^N}{2wN}\Big),
\]
completing the proof of Lemma \ref{lem:rip}.
\end{proof}
%%%%%%%%%%%%%%%%%%%%%%%%%%%%%%%%%%%%%%%%%%%%%%%%%%%%%%%%%%%%%%%%%%%%%%%%%%%%%

We can now turn to the
\begin{proof}[\bf Proof of Theorem {\ref{characterization_lr}}]
Through the present proof,  $\varLambda$  is assumed to be a
\mbox{non-peri}\-o\-dic Delone set,
and the properties  (PQ), (U), (LR) and FLC refer to it.\v1

We show (LR)\ $ \Longrightarrow$ ((PQ) and (U)) $\Longrightarrow$ ((PW) and (U)) $\Longrightarrow$ (LR).

\v1

The implication \ (LR)$\ \Longrightarrow\ $((PQ) and (U)) \ immediately follows
from Propositions~\ref{u} and  \ref{pw}.\v2

\v1

The implication ((PQ) and (U)) $\Longrightarrow$ ((PW) and (U)) is clear in view of
\eqref{eq:pqpw}.
\v1

In what follows, we establish linear repetivity (LR)  of a non-periodic
Delone set $\varLambda$ satifying (PW) and (U).  (Note that $\varLambda$
is repetitive and also of FLC (both properties follow from (PW))).

%%%%%%%%%%%%%%%%%%%%%%%%%%%%%%%%%%%%%%%%%%%%%%%%%%%%%%%%%%%%%%%%%%%%%%%%%%%%%%%%%%%%  New page 8

%Linear repetiveness (LR) of $\varLambda$\, follows immediately from
%the following lemma.\v2
%To complete the proof of Theorem \ref{characterization_lr},
 Let  $t\in\R^N$ be arbitrary and let $P=(x_0,R)\in\mathcal P$  be a ball pattern
 in $\varLambda$ with  $R:=r(P)\geq 3$. Select  $x\in L_P$ such that $t\in V_x$.
By Lemma \ref{lem:rip} we have
\[
r_{in}(V_x)\leq cR
\]
and then, by $(U)$  (see \eqref{eq:U}),
\[
r_{out}(V_x)\leq  c\sigma R.
\]

As the ball of radius $ r_{out}(V_x)$ around\, $t$\,  contains $x$ we obtain
%%%%%%%%%%%%%%%%%
\begin{equation}\label{eq:l3}
r_{cov}(L_P)\leq c\sigma R,\ \textrm{ whenever }\ R\geq 3.
\end{equation}
%%%%%%%%%%%%%%%%%
Since the collection of patches corresponding to ball patterns of
radius $\leq3$ is finite in view
of the (FLC) of $\varLambda$ (see \eqref{eq:flc1}),
the (LR) follows from \eqref{eq:l3}
 (for definition of (LR) see \eqref{eq:LR}).
This completes the proof of Theorem \ref{characterization_lr}.
\end{proof}
%%%%%%%%%%%%%%%%%%%%%%%%%%%%%%%%%%%%%%%%%%%%%%%%%%%%%%%%%%%%%%%%%%%%%%
%\v3

\begin{remark} \label{remark-U}  Let us shortly comment on assumption  (U) appearing above.  It can be understood as a form of isotropy condition. From this point of view it seems reasonable that it is not necessary in the one-dimensional case. Indeed, in this case one does not need to work with the 'centers' of the Voronoi cells but can rather work with intervals to the right (or left) of the points \cite{Bos}.  One might wonder whether assumption (U) be dropped   in the higher dimensional situation as well. On the other hand,  it seems also an interesting question which restrictions are imposed on the geometry of a discrete set by  condition (U) alone.
\end{remark}

\textbf{Acknowledgements.}  This work was partially supported by the German Science Foundation (DFG). Part of this work was done while one of the authors (D.L.) was visiting Rice University. He would like to thank the department of mathematics for hospitality.
%Special thanks are due to Michael Boshernitzan for most helpful discussions and making \cite{Bos} available to %us before publication. He would have been a rightful co-author.


\begin{thebibliography}{10}

\bibitem{BM} M. Baake, R. V. Moody (eds),  Directions in mathematical quasicrystals,   CRM Monogr. Ser., 13, Amer. Math. Soc., Providence, RI, 2000

\bibitem{Bes} A.\ Besbes,
Uniform subadditive ergodic theorem on aperiodic linearly repetitive tilings and applications,  \textit{Rev. Math. Phys.} \textbf{20} (2008),  597 -- 623.

\bibitem{Bes2} A. \ Besbes,
Contribution \`{a} l'\'{e}tude de quelques syst\`{e}mes quasi-cristallographiques,  PH D thesis, Paris (2008).

\bibitem{Bos} M. \ Boshernitzan, in preparation.

\bibitem{DL} D.~Damanik, D.~Lenz, Linear Repetitivity, I. Uniform Subadditive Ergodic Theorems and Applications,  \textit{Discrete Comput. Geometry}, {\bf 26} (2001), 411-428.


\bibitem{Du} {\sc F. Durand},  A characterization of substitutive
  sequences using return words,  \textit{ Discrete Math.}  \textbf{179}
  (1998) 89--101

\bibitem{Du2} F.\ Durand, Linearly recurrent subshifts have a finite number of non-periodic subshift factors, \textit{Ergod.\ Th.\ \& Dynam.\ Sys.} {\bf 20} (2000), 1061--1078,  Erratum: Ergodic Theory Dynam. Systems  \textbf{23}  (2003),   663--669.


\bibitem{GH} C. P. Geerse, A. Hof, Lattice gas models on self-similar aperiodic tilings, \textit{Rev. Math. Phys.}, {\bf 3} (1991), 163--221.


\bibitem{Hof1} {\sc A. Hof}, Some remarks on discrete aperiodic
Schr\"{o}dinger operators,
\textit{J.  Statist. Phys.}, 72, (1993)    1353--1374
%
\bibitem{Hof2} {\sc A. Hof}, A remark on Schr\"{o}dinger operators on
aperiodic tilings,  \textit{ J. Statist. Phys.}, 81, (1996)    851--855
%
\bibitem{Ni}
T.~Ishimasa, H.~U.~Nissen and Y.~Fukano,
New ordered state between crystalline and amorphous
in\/ {\tt Ni-Cr}\/ particles,
Phys.\ Rev.\ Lett.\ {\bf 55} (1985) 511--513.


\bibitem{Lag}
J.C.~Lagarias,  Geometric models for quasicrystals I. Delone sets of finite type,  \textit{Discrete Comput. Geom.}  \textbf{21}  (1999),  161--191.

\bibitem{LP} J.\ C.\ Lagarias, P.\ A.\ B.\ Pleasants, Repetitive Delone sets and quasicrystals,   \textit{Ergod.\ Th.\ \& Dynam.\ Sys.}    {\bf 23}, (2003), 831--867.

\bibitem{LP2} J.\ C.\ Lagarias, P.\ A.\ B.\ Pleasants, Local Complexity
  of Delone sets and Crystallinity,   \textit{Canad. Math. Bull.} {\bf 45} (2002), 634--652.

\bibitem{LMS2} J.-Y.  Lee, R. V.  Moody, B. Solomyak, Consequences of pure point diffraction spectra for multiset substitution systems, \textit{Discrete Comput. Geom.}  \textbf{29}  (2003),  525--560.

\bibitem{Len} D.~Lenz, Uniform ergodic theorems on subshifts over a finite alphabet, \textit{Ergodic Theory \& Dynamical Systems}, {\bf 22} (2002), 245--255.

\bibitem{Len1} D.~Lenz, Singular spectrum of Lebesgue measure zero for
one-dimensional quasicrystals, \textit{Commun. Math. Phys.},
\textbf{227} (2002), 119--130.

\bibitem{Len2} D.\ Lenz, Aperiodic linearly repetitive Delone sets are densely repetitive, \textit{Discrete Comput. Geom.} {\bf 31} (2004), 323--326.

\bibitem{Len3} D.\ Lenz, Continuity of eigenfunctions of uniquely ergodic dynamical systems and intensity of Bragg peaks, \textit{
Comm. Math. Phys.} \textbf{287} (2009),   225 -- 258.

\bibitem {LS} D.~Lenz, P. Stollmann, An ergodic theorem for Delone dynamical
  systems and existence of the integrated density of states,
  \textit{J. Anal. Math.}, \textbf{97} (2006), 1--23.

\bibitem{LS2}   D.~Lenz, P. Stollmann, Delone dynamical systems and associated random operators, in:  Operator algebras and mathematical physics (Constanta  2001),  Theta Foundation, Bucharest (2003).

\bibitem{Mon} T. \ Monteil, private communication, (2003).

\bibitem{Moo} R. V. Moody (ed),  The mathematics of long-range aperiodic order (Waterloo, ON, 1995),  403--441, NATO Adv. Sci. Inst. Ser. C Math. Phys. Sci., 1997.

\bibitem{Lot} M.\ Lothaire, Combinatorics on words, Encyclopedia of Mathematics and its Applications {\bf 17}, Addison-Wesley, Reading, Mass., (1983).


\bibitem{Pat} J. Patera (ed),  Quasicrystals and discrete geometry (Toronto, ON, 1995), Fields Inst. Monogr., 10, Providence, RI: American Mathematical Society, 1998.

\bibitem{Pri}  N. Priebe,  Towards a characterization of
  self-similar tilings in terms of dereived Voronoi tesselations, {\it  Geometriae Dedicata}, \textbf{79} (2000), 239--265.

\bibitem{PS} N. Priebe-Frank, B. Solomyak,  Characterization of planar pseudo-self-similar tilings.  \textit{Discrete Comput. Geom.}  \textbf{26}  (2001),  289--306.


\bibitem{SBGC}
D.~Shechtman, I.~Blech, D.~Gratias and J.~W.~Cahn,
\textit{Metallic phase with long-range orientational order and no
translational symmetry},
Phys.\ Rev.\ Lett.\ {\bf 53} (1984) 1951--1953.

\bibitem{Sen}  M. Senechal,
    Quasicrystals and Geometry, Cambridge University Press, Cambridge
    (1995).


\bibitem{Sol} B.\ Solomyak, Nonperiodicity implies unique composition for self-similar translationally finite tilings, \textit{Discrete\ Comput.\ Geom.} {\bf 20} (1998), 265--279.



\end{thebibliography}
\end{document}